\documentclass[a4paper,pdftex,reqno,10pt]{amsart}

\usepackage[english]{babel} 

\usepackage{amsmath,amsfonts}
\usepackage[autostyle]{csquotes}
\usepackage{hyperref}

\DeclareMathOperator{\PGammaL}{P\Gamma{}L}
\DeclareMathOperator{\st}{st}
\DeclareMathOperator{\GL}{GL}

\DeclareMathOperator{\Aut}{Aut}
\DeclareMathOperator{\im}{im}

\newcommand{\gaussmnum}[3]{\left[\begin{smallmatrix}{#1}\\{#2}\end{smallmatrix}\right]_{#3}}
\newcommand{\gaussmset}[2]{\left[\begin{smallmatrix}{#1}\\{#2}\end{smallmatrix}\right]}
\newcommand{\F}{\ensuremath{\mathbb{F}}}
\newcommand{\Fq}{\ensuremath{\F_q}}
\newcommand{\Fqn}{\ensuremath{V}}
\newcommand{\PFqn}{\ensuremath{L(\Fqn)}}
\newcommand{\Gqnk}{\ensuremath{\gaussmset{\Fqn}{k}}}
\newcommand{\I}[2]{\ensuremath{\mathcal{I}\left(#1,#2\right)}}
\newcommand{\dham}{\mathrm{d}_{\mathrm{Ham}}}

\newcommand{\sdist}{\mathrm{d}_{\mathrm{s}}}

\newtheorem{proposition}{Proposition}
\newtheorem{corollary}{Corollary}

\newtheorem{theorem}{Theorem}
\newtheorem{lemma}{Lemma}

\begin{document}

\title{A new upper bound for subspace codes}

\author[Heinlein]{Daniel Heinlein}
\address{Daniel Heinlein, University of Bayreuth, 95440 Bayreuth, Germany, daniel.heinlein@uni-bayreuth.de}

\author[Kurz]{Sascha Kurz}
\thanks{The work was supported by the ICT COST Action IC1104 
and grants KU 2430/3-1, WA 1666/9-1 -- ``Integer Linear Programming Models for Subspace Codes and 
Finite Geometry'' -- from the German Research Foundation.}
\address{Sascha Kurz, University of Bayreuth, 95440 Bayreuth, Germany, sascha.kurz@uni-bayreuth.de}

\subjclass{Primary 51E23, 05B40; Secondary 11T71, 94B25}
\keywords{subspace codes, network coding,  constant dimension codes, subspace distance, integer linear programming, partial spreads}

\maketitle

\begin{abstract}It is shown that the maximum size $A_2(8,6;4)$ of a binary subspace code of packet length $v=8$, minimum subspace distance
$d=4$, and constant dimension $k=4$ is at most $272$. In Finite Geometry terms, the maximum number of solids in 
$\operatorname{PG}(7,2)$, mutually intersecting in at most a point, is at most $272$. Previously, the best known upper bound 
$A_2(8,6;4)\le 289$ was implied by the Johnson bound and the maximum size $A_2(7,6;3)=17$ of partial plane spreads in 
$\operatorname{PG}(6,2)$.  The result was obtained by combining the classification of subspace codes with 
parameters $(7,17,6;3)_2$ and  $(7,34,5;\{3,4\})_2$ with integer linear programming techniques. The classification of 
$(7,33,5;\{3,4\})_2$ subspace codes is obtained as a byproduct.  
\end{abstract}

\section{Introduction}

For a prime power $q > 1$ let {\Fq} be the field with $q$ elements and $V\cong{\F}_q^v$ a $v$-dimensional vector space over {\Fq}. 
The set ${\PFqn}$ of all subspaces of $V$, or flats of the projective geometry 
$\operatorname{PG}(V)\cong \operatorname{PG}(\Fq)=:\operatorname{PG}(v-1,q)$, forms 
a metric space with respect to the subspace distance defined by $\sdist(U,W) = \dim(U+W)-\dim(U \cap W)=\dim(U)+\dim(W)-2\dim(U \cap W)$. 
The metric space $({\PFqn},\sdist)$ may be viewed as a $q$-analogue of the Hamming space $(\F_2^v,\dham)$ used in conventional coding
theory via the subset-subspace analogy~\cite{knuth71a}. In their seminal paper \cite{MR2451015} K{\"o}tter and Kschischang motivate 
coding on {\PFqn} via error correcting random network coding, see  \cite{MR1768542}. 
By {\Gqnk} we denote the set of all $k$-dimensional subspaces in {\Fqn}, where $0\le k\le v$,  
which has size $\gaussmnum{v}{k}{q}:= \prod_{i=1}^{k} \frac{q^{v-k+i}-1}{q^i-1}$.
A \emph{subspace code} is a subset of {\PFqn} and each element is called \emph{codeword}.   
By $(v,N,d;K)_q$ we denote a subspace code in {\Fqn} with minimum (subspace) distance $d$ and size  
$N$, where the dimensions of each codeword is contained in $K \subseteq \{0,1,\ldots,v\}$. As usual, a subspace code 
$C$ has the \emph{minimum distance} $d$, if $d \le \sdist(U,W)$ for all $U \ne W \in C$ and equality is attained at least once. 
The corresponding maximum size is denoted by $A_q(v,d;K)$. Its determination is called \emph{Main Problem of Subspace Coding} at several 
places. The \emph{dimension distribution} of a subspace code $C$ in {\Fqn} is a string $0^{m_0} 1^{m_1} \ldots v^{m_v}$ such that the 
number of $i$-dimensional codewords in $C$ is $m_i$, where entries with $m_i=0$ are commonly omitted. In the special case 
where the set $K$ of codeword dimensions is a singleton we speak of a \emph{constant dimension code} (cdc) 
and abbreviate $K=\{k\}$ by just $k$ in the above notation.   
 
In a $(v,N,d;k)_q$ code the minimum distance $d$ has to be an even number satisfying $2\le d\le 2k$. If $d=2k$ one speaks of partial 
$k$-spreads. While there is a lot of recent research on $A_2(v,2k;k)$, i.e., partial spreads, see e.g.\ 
\cite{honold2016partial,kurz2016improved,kurz2017,nastase2016maximumII,nastase2016maximum}, the known upper bounds for $A_2(v,d;k)$ 
with $d<2k$ are relatively straightforward. Besides recursive implications of the Johnson bound 
\begin{equation}
  \label{eq_johnson}
  A_q(v,d;k) \le \left\lfloor\frac{q^v-1}{q^k-1} \cdot A_q(v-1,d;k-1)\right\rfloor,
\end{equation}
see \cite[Theorem~4]{MR2810308}, the only improvement
$A_2(6,4;3)=77<81$ (for $q\le 9$ and $v\le 19$) was obtained in \cite{MR3329980}. In this paper we add 
$A_2(8,6;4)\le 272<289$ to this very short list. Assuming $4\le d\le 2k-2$, the only known case where the Johnson bound
is attained is given by $A_2(13,4;3)=1597245$ \cite{Braun16}. For numerical values of the known lower and upper bounds 
the sizes of subspace codes we refer the reader to the online tables \url{http://subspacecodes.uni-bayreuth.de} 
associated with \cite{HKKW2016Tables}. A survey on Galois geometries and coding theory can be found in \cite{Etzion2016}. 

The so-called Echelon--Ferrers construction, see e.g.~\cite{MR2589964}, gives $A_2(8,6;4)\ge 257$. More precisely, the corresponding 
code is a lifted maximum rank distance (MRD) code plus a codeword. Codes containing the lifted MRD code have a size of at most 
$257$, see  \cite[Theorem~10]{MR3015712}.  

The remaining part of the paper is structured as follows. In Section~\ref{sec_preliminaries} we collect theoretical preliminaries that are 
used later on to deduce the presence of certain substructures of a constant dimension code of relatively large size. Our main result, the 
proposed upper bound $A_2(8,6;4)\le 272$, is concluded in Section~\ref{main_sec} based on integer linear programming techniques. Here, the 
mentioned substructures are prescribed using classification results, see the webpage associated with \cite{HKKW2016Tables}, where 
the corresponding lists can be downloaded.

In Section~\ref{sec_alternative_approach} we present alternative approaches leading to the same result, i.e., independently verifying it. As 
a byproduct we classify the $(7,33,5;\{3,4\})_2$ subspace codes up to isomorphism in Theorem~\ref{thm_class_3}. We close with a summary and 
a discussion of possible future research in Section~\ref{sec_conclusion}. 

\section{Preliminaries}
\label{sec_preliminaries}
Later on we will classify special classes of subspace codes up to isomorphism. To this end, we remark that for $v\ge 3$ the automorphism group of 
the metric space $({\PFqn},\sdist)$ is given by the group $\langle \PGammaL(\Fqn), \pi \rangle$, with $\pi:\gaussmset{\Fqn}{k} \mapsto 
\gaussmset{\Fqn}{v-k}, U \mapsto U^{\perp}$ (fixing an arbitrary non-degenerated bilinear form for $^{\perp}$).
In particular for a subspace code $C$ with parameters $(n,N,d;K)_q$ the code $C^\perp = \pi(C) = \{U^\perp \mid U \in C\}$ is called the 
\emph{orthogonal code} of $C$ and it has the same parameters $(v,N,d;v-K)_q$, i.e., $A_q(v,d;K)=A_q(v,d;v-K)$, where $v-K=\{v-k\mid k\in K\}$.

In order to describe some structural properties of a constant dimension code and to give bounds we will consider incidences with fixed subspaces. To this end, 
let $\I{S}{X}$ be the set of subspaces in $S \subseteq \PFqn$ that are incident to $X \le \Fqn$, i.e.,
$\I{S}{X} = \{U \in S \mid U \le X \,\lor\, X \le U\}$. 

\begin{lemma}\label{lem:deg_uppter_bound}
Let $C$ be a $(v,N,d;k)_q$ cdc and $X \le \Fqn$. Then we have 
$$
  \#\I{C}{X}\le  \left\{\begin{array}{rcl}
    A_q(\dim(X),d;k) &:& \dim(X)\ge k,\\
    A_q(v-\dim(X),d;k-\dim(X)) &:& \dim(X) < k.
  \end{array}\right.
$$
\end{lemma}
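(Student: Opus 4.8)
The plan is to reduce both cases to the definition of $A_q$ by transporting the incident codewords into a vector space of the appropriate dimension in which they still form a constant dimension code of minimum distance at least $d$. First I would record the trivial dichotomy forced by the constant dimension: every codeword $U\in C$ has $\dim(U)=k$, so the containment $U\le X$ can only occur when $\dim(X)\ge k$, while $X\le U$ can only occur when $\dim(X)\le k$. Hence, when $\dim(X)\ge k$ the set $\I{C}{X}$ consists (up to the boundary case $X=U$ in dimension exactly $k$) of those codewords with $U\le X$, and when $\dim(X)<k$ it consists exactly of those codewords with $X\le U$.

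For the case $\dim(X)\ge k$, I would view $X$ itself as an ambient space of dimension $\dim(X)$. The codewords contained in $X$ are $k$-dimensional subspaces of $X$, and since they form a subset of $C$ their pairwise subspace distance is still at least $d$. Thus they constitute a constant dimension code in a $\dim(X)$-dimensional space with the same distance and dimension parameters, and by the very definition of $A_q$ we obtain $\#\I{C}{X}\le A_q(\dim(X),d;k)$.

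For the case $\dim(X)<k$, every incident codeword contains $X$, so I would pass to the quotient space $\Fqn/X$, which has dimension $v-\dim(X)$, via the standard correspondence $U\mapsto U/X$ between subspaces of $\Fqn$ containing $X$ and subspaces of $\Fqn/X$. This map is a bijection onto its image and sends each codeword to a $(k-\dim(X))$-dimensional subspace; the one point that requires an actual computation is that it preserves the subspace distance. The crux is the identity $(U/X)\cap(W/X)=(U\cap W)/X$, valid because $X\le U\cap W$; substituting $\dim\big((U\cap W)/X\big)=\dim(U\cap W)-\dim(X)$ into the defining formula for $\sdist$ makes the $\dim(X)$-terms cancel, yielding $\sdist(U/X,W/X)=2k-2\dim(U\cap W)=\sdist(U,W)\ge d$. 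Hence the images form a constant dimension code in a $(v-\dim(X))$-dimensional space with minimum distance at least $d$ and constant dimension $k-\dim(X)$, and $\#\I{C}{X}\le A_q(v-\dim(X),d;k-\dim(X))$ follows again from the definition of $A_q$.

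The only genuine obstacle is this distance-preservation step in the quotient; everything else is bookkeeping with dimensions and the definition of the maximal size $A_q$. I would finally double-check the boundary value $\dim(X)=k$, where both the restriction picture and the direct argument give $\I{C}{X}\subseteq\{X\}$, consistent with $A_q(k,d;k)=1$.
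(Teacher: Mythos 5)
Your proof is correct and follows essentially the same route as the paper: the paper handles the case $\dim(X)<k$ by writing $V=X\oplus V'$ and projecting each codeword $U_i=X\oplus U_i'$ onto the complement $V'$, which is naturally isomorphic to your quotient construction $U\mapsto U/X$, with the same cancellation of the $\dim(X)$-terms in the distance formula (the paper states this as an inequality $\sdist(U_i,U_j)\le\sdist(U_i',U_j')$, you as an equality; both suffice). The case $\dim(X)\ge k$ is left implicit in the paper exactly as the observation you spell out, namely that codewords contained in $X$ form a cdc inside $X$ itself.
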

\begin{proof}
For the second part we write $V=X\oplus V'$ and $U_i=X\oplus U_i'$ for all $U_i\in C$. With this we have 
$\sdist(U_i,U_j)=2k-2\dim(U_i\cap U_j)\le 2\left(k-\dim(X)\right)-2\dim(U_i'\cap U_j')=\sdist(U_i',U_j')$.
\end{proof}

If $\#\I{C}{X}$ is small, then we can state the following upper bound for $\#C$: 

\begin{lemma}\label{lemma:deglebimpliescodesizelebound}
Let $(v,N,d;k)_q$ be a cdc $C$ and $0\le l\le v$. If $\#\I{C}{X} \le b$ for all $X \le \Fqn$ with $\dim(X)=l$, then 
$N \le \frac{\gaussmnum{v}{l}{q} b}{\gaussmnum{k}{l}{q}}$ if $l \le k$ and $N \le \frac{\gaussmnum{v}{l}{q} b}{\gaussmnum{v-k}{l-k}{q}}$ 
if $k \le l$.
\end{lemma}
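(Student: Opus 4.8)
The plan is to prove both inequalities by a standard double-counting (incidence) argument, exploiting that $C$ consists solely of $k$-dimensional codewords. For a fixed $l$-dimensional subspace $X$, the set $\I{C}{X}$ collects the codewords $U$ with $U \le X$ or $X \le U$; only one of these containments can actually occur once $l \ne k$ (a $k$-space cannot lie inside a strictly smaller space, and vice versa), and in every case the relevant sub-collection of codewords sits inside $\I{C}{X}$, so its cardinality is bounded by $b$.

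First I would treat the case $l \le k$ by counting incident pairs $(X,U)$ with $\dim X = l$, $U \in C$, and $X \le U$ in two ways. Counting by codewords, each $U \in C$ has dimension $k$ and therefore contains exactly $\gaussmnum{k}{l}{q}$ subspaces of dimension $l$, so the number of such pairs equals $N\cdot\gaussmnum{k}{l}{q}$. Counting by the $l$-subspaces instead, for each of the $\gaussmnum{v}{l}{q}$ choices of $X$ the number of codewords $U$ with $X \le U$ is at most $\#\I{C}{X}\le b$. Hence $N\cdot\gaussmnum{k}{l}{q}\le\gaussmnum{v}{l}{q}\cdot b$, which rearranges to the claimed bound.

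For the case $k \le l$ I would run the dual argument, counting instead the pairs $(X,U)$ with $U \le X$. Counting by codewords, each $k$-dimensional $U$ is contained in exactly $\gaussmnum{v-k}{l-k}{q}$ subspaces of dimension $l$, giving $N\cdot\gaussmnum{v-k}{l-k}{q}$ pairs, while counting by the $X$'s bounds this by $\gaussmnum{v}{l}{q}\cdot b$ as before. This yields the second inequality.

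The argument is essentially routine, and I do not anticipate a genuine obstacle beyond bookkeeping. The only points requiring care are to select the correct direction of containment so that the counted sub-collection genuinely lies inside $\I{C}{X}$, and to invoke the standard Gaussian-binomial identities for the number of $l$-subspaces contained in (respectively containing) a fixed $k$-space.
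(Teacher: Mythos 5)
Your proposal is correct and is essentially identical to the paper's own proof: both count incidences between codewords and $l$-dimensional subspaces, using that each $k$-dimensional codeword contains exactly $\gaussmnum{k}{l}{q}$ $l$-subspaces (for $l\le k$) or lies in exactly $\gaussmnum{v-k}{l-k}{q}$ of them (for $l\ge k$), and then bound the count per $X$ by $b$. Your extra remark that only one direction of incidence can occur when $l\ne k$ is a correct (and implicit in the paper) justification that the counted pairs lie in $\I{C}{X}$.
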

\begin{proof}
Double counting the incidences between codewords $U\in C$ and subspaces $X$ with $\dim(X)=l$ gives
$
\gaussmnum{k}{l}{q} \cdot N
= \sum_X \#\I{C}{X} \le \sum_X b
= \gaussmnum{v}{l}{q} b
$
if $l\le k$ and 
$
\gaussmnum{v-k}{l-k}{q}\cdot N
= \sum_X \#\I{C}{X} \le
\sum_X b = \gaussmnum{v}{l}{q} b
$
if $l\ge k$.
\end{proof}

Now, we specialize our considerations to constant dimension codes with $v=2k$ and minimum subspace distance $d=2k-2$.
\begin{corollary}
\label{cor_one_incidence}
Let $C$ be a $(2k,N,2k-2;k)_q$ cdc for $k\ge 1$ and $c\in\mathbb{N}$. 
If $\#\I{C}{H} \le q^k+1 -c$ for all hyperplanes $H$ or $\#\I{C}{P} \le q^k+1 -c$ for all points $P$, then 
$N \le (q^k+1)(q^k+1 -c)$.
\end{corollary}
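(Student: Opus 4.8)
The plan is to read this off directly from Lemma~\ref{lemma:deglebimpliescodesizelebound}, since the corollary is merely its specialization to $v=2k$, $d=2k-2$ evaluated at the two extreme incidence dimensions. I set $b=q^k+1-c$. In the hyperplane alternative every $H$ has $\dim(H)=l=2k-1$, so $k\le l$ and the lemma yields $N\le \gaussmnum{2k}{2k-1}{q}\, b\,/\,\gaussmnum{k}{k-1}{q}$; in the point alternative every $P$ has $\dim(P)=l=1$, so $l\le k$ and the lemma yields $N\le \gaussmnum{2k}{1}{q}\, b\,/\,\gaussmnum{k}{1}{q}$. The only bookkeeping is to confirm that one lands in the correct branch, i.e.\ $2k-1\ge k$ respectively $1\le k$, both valid for all $k\ge 1$.

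Next I would collapse the Gaussian binomials using the symmetry $\gaussmnum{n}{m}{q}=\gaussmnum{n}{n-m}{q}$ together with $\gaussmnum{n}{1}{q}=\frac{q^n-1}{q-1}$. Thus $\gaussmnum{2k}{2k-1}{q}=\gaussmnum{2k}{1}{q}=\frac{q^{2k}-1}{q-1}$ and $\gaussmnum{k}{k-1}{q}=\gaussmnum{k}{1}{q}=\frac{q^k-1}{q-1}$, so in either alternative the estimate becomes
$$
N\le \frac{q^{2k}-1}{q^k-1}\,(q^k+1-c)=(q^k+1)(q^k+1-c),
$$
where the final equality is the factorization $q^{2k}-1=(q^k-1)(q^k+1)$.

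As a conceptual cross-check I would observe that the two hypotheses are in fact equivalent under the orthogonal-code involution $\pi$ recalled in the preliminaries: for $v=2k$ the code $C^\perp$ again has parameters $(2k,N,2k-2;k)_q$, a hyperplane $H$ is sent to the point $H^\perp$, and the equivalences $U\le H\iff H^\perp\le U^\perp$ and $H\le U\iff U^\perp\le H^\perp$ give $\#\I{C}{H}=\#\I{C^\perp}{H^\perp}$. Hence the point bound for $C$ is literally the hyperplane bound for $C^\perp$, so a single computation really covers both alternatives. I do not anticipate any genuine obstacle here; the entire content sits in the two structural lemmas, and this step just repackages them for the self-dual regime $v=2k$, $d=2k-2$ that the remainder of the paper exploits.
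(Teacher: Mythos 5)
Your proposal is correct and is essentially identical to the paper's proof, which simply applies Lemma~\ref{lemma:deglebimpliescodesizelebound} with $b=q^k+1-c$ and $l\in\{1,v-1\}$; you have merely spelled out the Gaussian binomial simplification $\gaussmnum{2k}{1}{q}/\gaussmnum{k}{1}{q}=q^k+1$ that the paper leaves implicit. The duality cross-check via the orthogonal code is a nice sanity check but not needed for the argument.
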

\begin{proof}
Apply Lemma~\ref{lemma:deglebimpliescodesizelebound} with $b=q^k+1-c$ and $l\in\{1,v-1\}$.
\end{proof}

Corollary~\ref{cor_one_incidence} will be applied in Section~\ref{main_sec} in order to deduce $A_2(8,6;4)\le 272$. 
In some cases it is computationally beneficial to consider the intersection of a subspace code with a hyperplane, see Section~\ref{sec_alternative_approach}.
\begin{lemma}(\cite[Lemma~2.8.i]{MR3543542})
\label{lemma_hyperplane}
Let $C$ be a $(v,N,d;K)_q$ subspace code and $P, H \le \Fqn$ with $\dim(P)=1$, $\dim(H)=v-1$, $P \not \le H$, and $d \ge 2$. Then the so-called \emph{shortened code}
$S(C,P,H) = \{U \cap H \mid U \in \I{C}{P} \} \cup \I{C}{H} $
is a $(v-1,\#\I{C}{P}+\#\I{C}{H},d';K')_q$ subspace code with $d' \ge d-1$ and $K' = (K \cup \{k-1 \mid k \in K\}) \cap \{0, 1, \ldots, v\}$.
\end{lemma}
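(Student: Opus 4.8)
The plan is to split the shortened code into its two defining pieces and to analyse them through the elementary identity $\sdist(A,B)=\dim(A)+\dim(B)-2\dim(A\cap B)$ together with the dichotomy that intersecting any subspace $W\le V$ with the hyperplane $H$ either leaves its dimension unchanged (when $W\le H$) or lowers it by exactly one (when $W\not\le H$). Write the point part $S_P=\{U\cap H\mid U\in\I{C}{P}\}$ and the hyperplane part $S_H=\I{C}{H}$. Every element of $S_P$ is contained in $H$ by construction, and every nondegenerate element of $S_H$ already satisfies $U\le H$, so $S(C,P,H)$ is a set of subspaces of the $(v-1)$-dimensional space $H$. For the dimensions I would note that a codeword $U\in\I{C}{P}$ with $P\le U$ cannot lie in $H$, since $P\not\le H$; hence $U\cap H$ is a hyperplane of $U$ and $\dim(U\cap H)=\dim(U)-1$, whereas the codewords in $S_H$ keep their dimension. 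Consequently all codeword dimensions lie in $K'=(K\cup\{k-1\mid k\in K\})\cap\{0,1,\ldots,v\}$.

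The heart of the argument is the distance estimate, which I would carry out in three cases for a pair of distinct contributing subspaces. First, for $U,U'\in\I{C}{P}$ the point $P$ lies in $U\cap U'$, so $U\cap U'\not\le H$ and $\dim\bigl((U\cap U')\cap H\bigr)=\dim(U\cap U')-1$; substituting the three affected dimensions into the distance identity the corrections cancel and one finds $\sdist(U\cap H,U'\cap H)=\sdist(U,U')\ge d$. Second, for $U,U'\in\I{C}{H}$ nothing is changed, so the distance is again at least $d$. Third, in the mixed case $U\in\I{C}{P}$ and $W\in\I{C}{H}$ one has $W\le H$, hence $(U\cap H)\cap W=U\cap W$, and only the single dimension drop of $U\cap H$ survives, giving $\sdist(U\cap H,W)=\sdist(U,W)-1\ge d-1$. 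Taking the minimum over all pairs yields $d'\ge d-1$.

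Finally I would read off the size from the distance bound itself. Since $d\ge 2$ forces every pairwise distance computed above to be at least $d-1\ge 1>0$, no two contributing subspaces can coincide: the map $U\mapsto U\cap H$ is injective on $\I{C}{P}$ (a collision would give distance $0$, contradicting the first case) and its image is disjoint from $\I{C}{H}$ (a coincidence would give distance $0$, contradicting the mixed case). Hence $\#S(C,P,H)=\#\I{C}{P}+\#\I{C}{H}$, as claimed. The main obstacle is the mixed case: one must check that passing from $U$ to $U\cap H$ lowers the distance to a codeword of $\I{C}{H}$ by exactly one and never more, which is precisely what the inclusion $W\le H$ guarantees. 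A secondary point worth flagging is the treatment of the degenerate codewords $\{0\}$ and $V$, should they occur in $C$; these must be inspected separately but do not affect the stated bounds.
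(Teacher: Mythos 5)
The paper contains no proof of this lemma: it is quoted, with a citation, from \cite[Lemma~2.8.i]{MR3543542}, so there is no in-paper argument to compare yours against. Judged on its own, your proof is correct and self-contained, and it is the natural argument for this statement. The three-case distance analysis is sound: for $U,U'\in\I{C}{P}$ the dimension corrections cancel because $P\le U\cap U'$ forces $\dim\bigl((U\cap U')\cap H\bigr)=\dim(U\cap U')-1$, giving $\sdist(U\cap H,U'\cap H)=\sdist(U,U')\ge d$; the mixed case correctly exploits $W\le H$ to get $(U\cap H)\cap W=U\cap W$ and hence a drop of exactly one, which is indeed the step where a careless argument could lose more than one; and the exact size follows because $d\ge 2$ makes every computed distance positive, which simultaneously yields injectivity of $U\mapsto U\cap H$ on $\I{C}{P}$ and disjointness of its image from $\I{C}{H}$ --- this is precisely where the hypothesis $d\ge 2$ enters. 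Your closing caveat about degenerate codewords is apt but harmless: if $0\in C$ then $d\ge 2$ excludes $P$ from $C$, so injectivity survives; and if $V\in C$ then $V\in\I{C}{H}$ is not a subspace of $H$ (a blemish of the statement as quoted, not of your argument), while the size count is still unaffected, since $V\cap H=H$ and $H\in C$ together would contradict $\sdist(V,H)=1<d$.
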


Applying Lemma~\ref{lemma_hyperplane} for a $(v,N,d;k)_q$ cdc $C$ gives a $(v-1,N',d';\{k-1,k\})_q$ subspace code, where $d'\ge d-1$ and 
$N'=\#\I{C}{P}+\#\I{C}{H}$. For a more refined analysis we will consider incidences of codewords with pairs of points and hyperplanes.

\begin{proposition}
\label{prop_intermediate}
Let $S \subseteq \Gqnk$, $1 \le k \le v-1$, and $b \in \mathbb{N}$. If
$\#S > 
\frac{(q^v-1)(b-1)}{q^{v-k}+q^k-2}$, 
then there is a hyperplane $\bar{H}$ and a point $\bar{P}\not\leq \bar{H}$ 
with $\#\I{S}{\bar{H}} + \#\I{S}{\bar{P}} \ge b$.
\end{proposition}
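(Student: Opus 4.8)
The plan is to use a double-counting (averaging) argument over all \emph{antiflags}, i.e.\ point-hyperplane pairs $(P,H)$ with $\dim P = 1$, $\dim H = v-1$, and $P \not\le H$. First I would record the relevant cardinalities: there are $\gaussmnum{v}{1}{q} = \frac{q^v-1}{q-1}$ points and equally many hyperplanes, while a fixed point lies in $\frac{q^{v-1}-1}{q-1}$ hyperplanes and is therefore avoided by exactly $q^{v-1}$ of them. Hence the number of antiflags equals $\frac{q^v-1}{q-1}\cdot q^{v-1}$.

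Next I would compute, for a single codeword $U \in S$, the number of antiflags $(P,H)$ satisfying $P \le U$ or $U \le H$; these are precisely the antiflags for which $U$ is counted in $\#\I{S}{P}+\#\I{S}{H}$. By inclusion--exclusion this number is
\[\#\{(P,H): P \le U,\, P\not\le H\} + \#\{(P,H): U\le H,\, P\not\le H\} - \#\{(P,H): P\le U\le H,\, P\not\le H\}.\]
The key observation, and the step that makes the denominator clean, is that the overlap term vanishes: $P \le U \le H$ forces $P \le H$, contradicting $P \not\le H$. The first term equals $\frac{q^k-1}{q-1}\cdot q^{v-1}$ (points of $U$, each avoided by $q^{v-1}$ hyperplanes) and the second equals $\frac{q^{v-k}-1}{q-1}\cdot q^{v-1}$ (hyperplanes through $U$, each avoiding $q^{v-1}$ points), so each $U$ is incident to exactly $\frac{q^{v-1}}{q-1}\left(q^k+q^{v-k}-2\right)$ antiflags, independently of $U$.

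Summing over $U \in S$ then yields $\sum_{(P,H)}\left(\#\I{S}{H}+\#\I{S}{P}\right) = \#S\cdot\frac{q^{v-1}}{q-1}\left(q^k+q^{v-k}-2\right)$, where the sum runs over all antiflags and, for each fixed antiflag, the two contributing sets are disjoint (again because $P\le U\le H$ is impossible). Dividing by the number of antiflags gives the average value $\frac{\#S\,(q^k+q^{v-k}-2)}{q^v-1}$. The hypothesis $\#S > \frac{(q^v-1)(b-1)}{q^{v-k}+q^k-2}$ is exactly the statement that this average strictly exceeds $b-1$, so at least one antiflag $(\bar P,\bar H)$ attains $\#\I{S}{\bar H}+\#\I{S}{\bar P} > b-1$; since the left-hand side is an integer, it is at least $b$. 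I do not expect any real obstacle beyond careful bookkeeping of the Gaussian-binomial counts; the only genuinely load-bearing point is the vanishing of the overlap term, which is what produces the precise denominator $q^{v-k}+q^k-2$ rather than a weaker bound.
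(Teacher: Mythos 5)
Your proposal is correct and is essentially the paper's own argument: both double-count incidences between codewords and antiflags $(P,H)$ with $P\not\le H$, both rely on the key observation that $\I{S}{P}\cap\I{S}{H}=\emptyset$ (since $P\le U\le H$ would force $P\le H$), and both arrive at the same count $\#S\cdot\frac{q^{v-1}}{q-1}\left(q^k+q^{v-k}-2\right)$. The only difference is presentational — the paper argues by contradiction while you phrase it as an averaging argument plus integrality — which is logically the same proof.
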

\begin{proof}
Assume the contrary, i.e., $\#\I{S}{H} + \#\I{S}{P} \le  b-1$ for all pairs of points and hyperplanes $(P,H)$ 
with $P\not\leq H$. Double counting the triples $(P,H,U)$, where $U\in \I{\Gqnk}{H} \cup \I{\Gqnk}{P}$ gives 
\begin{align*}
\left(
\gaussmnum{v-k}{v-1-k}{q} \left(\gaussmnum{v}{1}{q}-\gaussmnum{v-1}{1}{q}\right)
+
\gaussmnum{k}{1}{q} \left(\gaussmnum{v}{v-1}{q}-\gaussmnum{v-1}{v-1-1}{q}\right)
\right)\cdot \# S
\\
=
\sum_P \sum_{H \in \gaussmset{\Fqn}{v-1} \setminus \I{\gaussmset{\Fqn}{v-1}}{P}} 
\Big(\#\I{S}{H} +\#\I{S}{P}\Big) \enspace ,
\end{align*}
noting that $\I{\Gqnk}{H} \cap \I{\Gqnk}{P}=\emptyset$, due to $P\not\leq H$. Using $\gaussmnum{a}{b}{q}=\gaussmnum{a}{a-b}{q}$ and 
$\#\I{S}{H} + \#\I{S}{P} \le  b-1$ we obtain
$$
\left(\gaussmnum{v-k}{1}{q}+\gaussmnum{k}{1}{q}\right)\cdot \left(\gaussmnum{v}{v-1}{q}-\gaussmnum{v-1}{v-1-1}{q}\right)
\cdot \# S
\le 
\gaussmnum{v}{1}{q} \left(\gaussmnum{v}{v-1}{q}-\gaussmnum{v-1}{v-1-1}{q}\right) \cdot (b-1) \enspace ,
$$
so that
$
  \# S\le \frac{
\gaussmnum{v}{1}{q}
(b-1)
}{
\gaussmnum{v-k}{1}{q} + \gaussmnum{k}{1}{q}
} 
=\frac{(q^v-1)(b-1)}{q^{v-k}+q^k-2}
$, 
which is a contradiction.
\end{proof}

Again, we specialize our considerations to constant dimension codes with $v=2k$ and minimum distance $d=2k-2$. Using the two well known facts 
$A_q(v,2k;k) = \frac{q^v-q}{q^k-1}-q+1$ for $v \equiv 1 \pmod{k}$ and $2 \le k \le v$ , due to a result on partial spreads, 
see \cite{MR0404010}, and $A_q(v,d;k)=A_q(v,d;v-k)$, due to the properties of orthogonal codes, we conclude:

\begin{corollary}\label{cor:ex_P_H}
For a $(2k,N,2k-2;k)_q$ cdc 
$C$ in $\Fqn$, where $k\ge 3$, we have $\#\I{C}{P} \le q^k+1$ and $\#\I{C}{H} \le q^k+1$ 
for all points $P$ and hyperplanes $H$. 
If $N > (q^k+1)(q^k+1 -(c+1)/2)$ for some 
$c\in\mathbb{N}$,   
then there is a hyperplane $\bar{H}$ and a point $\bar{P}$ 
with  
$\#\I{C}{\bar{H}} + \#\I{C}{\bar{P}} \ge 2(q^k+1) -c$ 
and $\bar{P}\not\leq\bar{H}$.
\end{corollary}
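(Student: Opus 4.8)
The plan is to derive the two claimed bounds $\#\I{C}{P}\le q^k+1$ and $\#\I{C}{H}\le q^k+1$ directly from Lemma~\ref{lem:deg_uppter_bound}, and then feed the assumed size bound on $N$ into Proposition~\ref{prop_intermediate} to extract the desired incident pair $(\bar P,\bar H)$. For the first part, I would apply Lemma~\ref{lem:deg_uppter_bound} to a point $P$ (so $\dim(X)=1<k$, using $k\ge 3$) to obtain $\#\I{C}{P}\le A_q(v-1,d;k-1)=A_q(2k-1,2k-2;k-1)$. Since $2k-2=2(k-1)$, this is a partial spread number $A_q(2k-1,2(k-1);k-1)$, and because $2k-1\equiv 1\pmod{k-1}$, the cited partial spread result of~\cite{MR0404010} gives exactly $A_q(2k-1,2(k-1);k-1)=\frac{q^{2k-1}-q}{q^{k-1}-1}-q+1=q^k+1$. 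For hyperplanes $H$ (so $\dim(X)=v-1=2k-1\ge k$) the lemma gives $\#\I{C}{H}\le A_q(2k-1,2k-2;k)$; here I would invoke orthogonality, $A_q(v,d;k)=A_q(v,d;v-k)$, to rewrite this as $A_q(2k-1,2k-2;k-1)=q^k+1$ as well, matching the point bound.

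For the second part, the aim is to choose the parameter $b$ in Proposition~\ref{prop_intermediate} so that its size threshold coincides with the hypothesis $N>(q^k+1)(q^k+1-(c+1)/2)$, and so that its conclusion reads $\#\I{C}{\bar H}+\#\I{C}{\bar P}\ge 2(q^k+1)-c$. Setting $b=2(q^k+1)-c$ makes the conclusion automatic, so the real task is to verify that with $v=2k$ the proposition's threshold $\frac{(q^v-1)(b-1)}{q^{v-k}+q^k-2}$ is at most $(q^k+1)(q^k+1-(c+1)/2)$. Substituting $v=2k$ the denominator becomes $q^k+q^k-2=2(q^k-1)$ and the numerator becomes $(q^{2k}-1)(b-1)=(q^k-1)(q^k+1)(b-1)$, so the threshold simplifies to $\frac{(q^k+1)(b-1)}{2}$. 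With $b=2(q^k+1)-c$ we have $b-1=2(q^k+1)-(c+1)$, whence $\frac{(q^k+1)(b-1)}{2}=(q^k+1)\bigl(q^k+1-(c+1)/2\bigr)$, which is exactly the stated threshold. Thus $N$ exceeding this value triggers Proposition~\ref{prop_intermediate} with $b=2(q^k+1)-c$, yielding a point $\bar P$ and hyperplane $\bar H$ with $\bar P\not\le\bar H$ and $\#\I{C}{\bar H}+\#\I{C}{\bar P}\ge 2(q^k+1)-c$.

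The main obstacle I anticipate is purely bookkeeping rather than conceptual: confirming the partial spread evaluation in the first part, namely that the hypotheses $v\equiv 1\pmod k$ and $2\le k\le v$ of the result in~\cite{MR0404010} are met in the shifted parameters $(2k-1,k-1)$, and that the resulting expression indeed collapses to $q^k+1$. The congruence $2k-1\equiv 1\pmod{k-1}$ holds because $2k-1=2(k-1)+1$, and the arithmetic simplification $\frac{q^{2k-1}-q}{q^{k-1}-1}-q+1=\frac{q(q^{2k-2}-1)}{q^{k-1}-1}-q+1=q(q^{k-1}+1)-q+1=q^k+1$ is routine but must be stated. Everything else is substitution into the two earlier results, so once the threshold identity above is checked the corollary follows immediately.
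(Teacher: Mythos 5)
Your proposal is correct and takes essentially the same route as the paper: Lemma~\ref{lem:deg_uppter_bound} together with Beutelspacher's partial spread formula and orthogonality yields the two bounds $q^k+1$, and Proposition~\ref{prop_intermediate} with $b=2(q^k+1)-c$ yields the pair $(\bar{P},\bar{H})$. The only difference is that you write out the arithmetic (the evaluation $A_q(2k-1,2k-2;k-1)=q^k+1$ and the simplification of the threshold to $(q^k+1)\bigl(q^k+1-(c+1)/2\bigr)$) that the paper leaves implicit.
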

\begin{proof}
Lemma~\ref{lem:deg_uppter_bound} gives $\#\I{C}{P} \le A_q(2k-1,2k-2;k-1)=q^k+1$ 
and 
$\#\I{C}{H} \le A_q(2k-1,2k-2;k)=A_q(2k-1,2k-2;k-1)=q^k+1$. 
The seconds statement follows 
from Proposition~\ref{prop_intermediate} using $b=2(q^k+1) -c$.
\end{proof}
Corollary~\ref{cor:ex_P_H} will be applied in Section~\ref{sec_alternative_approach} in order to deduce $A_2(8,6;4)\le 272$.

\section{An integer linear programming bound for $A_2(8,6;4)$}
\label{main_sec}
Applying Corollary~\ref{cor_one_incidence} with $k=4$ gives the following facts. If all points or all hyperplanes are incident to at most $17-c$ 
codewords of an $(8,N,6;4)_2$ cdc $C$, then $N \le 17(17-c)$. In other words, if $N \ge 273$, then there is a point $\bar{P}$ and a 
hyperplane $\bar{H}$ that are incident to $17$ codewords in $C$, respectively. The $17$ codewords incident to $\bar{H}$ form a 
$(7,17,6;4)_2$ constant dimension code whose orthogonal is a $(7,17,6;3)_2$ cdc. The latter substructures have been completely classified up to 
isomorphism. 

\begin{theorem}(\cite[Theorem 5]{honold2016classification})
\label{thm_class_1}
$A_2(7,6;3)=17$ and there are 715 isomorphism types of $(7,17,6;3)_2$ constant dimension codes. Their automorphism groups have orders: 
$1^{551}2^{70}3^{27}4^{19}6^{6}7^{1}8^{8}12^{2}16^{7}24^{6}32^{5}42^{1}48^{5}64^{2}96^{1}112^{1}128^{1}192^{1}2688^{1}$.
\end{theorem}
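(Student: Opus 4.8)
The plan is to treat this as a classification of maximum partial $3$-spreads in $V=\mathbb{F}_2^7$ and to carry out an exhaustive, isomorph-free enumeration under the symmetry group. First I would pin down the size. Since $d=6=2\cdot 3$, the condition $\sdist(U,W)\ge 6$ for distinct codewords $U,W\in\gaussmset{V}{3}$ forces $\dim(U\cap W)=0$; thus a $(7,17,6;3)_2$ code is exactly a set of $17$ pairwise trivially intersecting $3$-spaces, i.e.\ a partial $3$-spread. The value $A_2(7,6;3)=17$ then follows from the partial-spread formula recalled before Corollary~\ref{cor:ex_P_H}: with $v=7\equiv 1\pmod 3$ one gets $A_2(7,6;3)=\frac{2^7-2}{2^3-1}-2+1=17$, while a classical spread construction supplies achievability. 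Because $q=2$ is prime there are no field automorphisms, and since $k=3\neq v-k=4$ the duality $\pi$ does not preserve the codeword dimension; hence isomorphism of $(7,17,6;3)_2$ codes is realized by $\PGammaL(V)=\GL(7,2)$ acting on $\gaussmset{V}{3}$, and the automorphism group of a code $C$ is its set stabilizer $\st_{\GL(7,2)}(C)$.

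Next I would exploit the geometry of the uncovered points to seed and prune the search. A size-$17$ partial spread covers $17\cdot 7=119$ of the $2^7-1=127$ points of $\operatorname{PG}(6,2)$, leaving a hole set $\mathcal H$ of exactly $8$ points, where every point outside $\mathcal H$ lies on a unique plane of the spread. The incidence pattern of $\mathcal H$ with the $3$-spaces sharply constrains how a partial spread can be completed, so I would first determine the possible configurations $\mathcal H$ up to $\GL(7,2)$ and then, for each, enumerate the spreads filling the complementary $119$ points. This converts the task into a sequence of smaller exact-cover / clique problems on the ``compatibility graph'' whose vertices are the $3$-spaces disjoint from a fixed partial configuration and whose edges join disjoint pairs.

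The enumeration itself I would run by isomorph-free generation (orderly generation or McKay's canonical augmentation): build partial spreads one plane at a time, after each extension compute a canonical form under $\GL(7,2)$, and retain only one representative per orbit, so that no isomorphic duplicate is generated and completeness is guaranteed. Fixing the first plane (all are equivalent) and using the hole analysis to cap the branching keeps the tree manageable. For every completed code of size $17$ I would compute $\st_{\GL(7,2)}(C)$ and tally the orders; the assertion is that exactly $715$ orbits arise, with the stated multiset of stabilizer orders. As a consistency check I would verify that $\sum_C |\GL(7,2)|/|\st(C)|$ equals the total number of labelled codes produced, which cross-validates both the orbit count and the automorphism orders.

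The main obstacle is the combinatorial size together with trustworthy isomorph rejection: $\GL(7,2)$ is large, the number of $3$-spaces is $\gaussmnum{7}{3}{2}$, and naive backtracking explodes, so the whole scheme hinges on a correct and efficient canonical-form computation and on a genuine proof that the search tree is exhausted (no code missed, none double-counted). I would therefore corroborate the outcome by a second, independent computation---for instance prescribing each candidate stabilizer order as a symmetry via integer linear programming to recover the symmetric codes, and re-deriving the trivial-group codes by a separate canonical-augmentation implementation---so that the final list of $715$ isomorphism types and the automorphism-order distribution are confirmed by two methods.
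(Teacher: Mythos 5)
The first thing to note is that the paper does not prove this statement at all: Theorem~\ref{thm_class_1} is imported verbatim from \cite[Theorem~5]{honold2016classification}, and the only internal information about its provenance is the remark in Section~\ref{sec_alternative_approach} that the classification was obtained with the exact clique-search package \texttt{cliquer} \cite{niskanen2003cliquer}. So the comparison is really with that cited work. Your preliminary reductions are all correct and match it: for $k=3$ the condition $\sdist(U,W)\ge 6$ forces $\dim(U\cap W)=0$, so these codes are exactly partial plane spreads; $A_2(7,6;3)=17$ follows from Beutelspacher's formula \cite{MR0404010} with $v=7\equiv 1\pmod 3$; since $q=2$ and $k=3\ne v-k=4$, the relevant equivalence group is indeed $\GL(7,2)$ acting on $\gaussmset{\Fqn}{3}$, and the hole set has $127-17\cdot 7=8$ points. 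Your plan --- pre-classify the hole configurations, then do isomorph-free generation (canonical augmentation/orderly generation) with stabilizer computations and an orbit--stabilizer consistency check --- is methodologically sound and is the same kind of computer classification the cited reference performs; the main technical difference is the isomorph-rejection machinery (canonical augmentation during generation, versus the reference's exhaustive clique search seeded by structural analysis followed by forming orbits under the relevant group action, which is also how Theorem~\ref{thm_class_3} of the present paper is proved). The one caveat worth stating plainly: an outline of this kind cannot by itself certify the numbers $715$ and the multiset of automorphism orders --- those are facts about the output of an executed, exhaustively verified computation, and your proposal correctly acknowledges this by building in cross-validation; the same limitation applies to the original source, so this is not a mathematical gap but an inherent feature of the statement.
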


These and all other classified constant dimension codes mentioned later on can be downloaded from the webpage of \cite{HKKW2016Tables}. The corresponding 
automorphism groups have been computed for this paper with the tool described in \cite{ubt_epub42}.

In general the determination of $A_q(v,d;k)$ can be formulated as an integer linear program (ILP), see e.g.~\cite{MR2796712}.
\begin{lemma}
\label{lemma_ILP}
  Let $q$ be a prime power, $v$, $0 \le k \le v/2$ non-negative integers and $d \le 2k$ a non-negative even integer. Using the abbreviations 
  $G:=\Gqnk$ and $\delta:=d/2$ the value of $A_q(v,d;k)$ coincides with the optimal target value of the binary linear program
\begin{align*}
\max
\sum_{U \in G} &x_U
\\
\st
\sum_{U \in \I{G}{A}} &x_U \le A_q(v-a,d;k-a)	&\forall A \in \gaussmset{\Fqn}{a} \forall a \in \{1, \ldots, k-\delta\}
\\
\sum_{U \in \I{G}{A}} &x_U \le 1				&\forall A \in \gaussmset{\Fqn}{a} \forall a \in \{k-\delta+1, k+\delta-1\}
\\
\sum_{U \in \I{G}{A}} &x_U \le A_q(v-a,d;k)		&\forall A \in \gaussmset{\Fqn}{a} \forall a \in \{k+\delta, \ldots, v-1\}
\\
&x_U \in \{0, 1\} 								&\forall U \in G
\end{align*}
\end{lemma}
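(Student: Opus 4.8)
The plan is to show that the proposed integer linear program is a valid formulation of the Main Problem of Subspace Coding, i.e.\ that its optimal value equals $A_q(v,d;k)$. I would prove this by establishing a bijection between feasible $0/1$-solutions that attain the optimum and constant dimension codes of maximum size, verifying two directions: every code yields a feasible solution of the same cardinality, and every feasible solution yields a code of the same minimum distance.

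First I would fix the interpretation that a $0/1$-vector $(x_U)_{U \in G}$ encodes the code $C = \{U \in G \mid x_U = 1\}$, so that the target value $\sum_{U \in G} x_U$ is exactly $\#C$. The substance of the argument lies in showing that the constraints are equivalent to the minimum distance requirement $\sdist(U,W) \ge d$ for all distinct $U, W \in C$. Since all codewords have dimension $k$, we have $\sdist(U,W) = 2k - 2\dim(U \cap W)$, so the distance condition $\sdist(U,W) \ge d = 2\delta$ is equivalent to $\dim(U \cap W) \le k - \delta$. Thus the forbidden configuration is a pair of distinct codewords meeting in a subspace of dimension at least $k - \delta + 1$. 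I would argue that this is controlled precisely by the middle family of constraints: for each $A$ with $k-\delta+1 \le \dim(A) \le k+\delta-1$, the inequality $\sum_{U \in \I{G}{A}} x_U \le 1$ forbids two codewords from simultaneously containing a common subspace $A$ of dimension $k-\delta+1$ (and, dually via $\pi$, from being jointly contained in a common $A$ of dimension $k+\delta-1$). Conversely, two codewords $U \ne W$ with $\dim(U \cap W) = k - \delta + j$ for some $j \ge 1$ share a common subspace of dimension exactly $k-\delta+1$, which is covered by one of these constraints, so any feasible solution automatically has minimum distance at least $d$.

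Next I would verify that the remaining two families of constraints are valid but redundant, so that adding them does not change the optimum. For $a \le k-\delta$ and any $A$ of dimension $a$, the codewords through $A$ form, by Lemma~\ref{lem:deg_uppter_bound}, a subcollection bounded by $A_q(v-a,d;k-a)$; for $a \ge k+\delta$ the codewords contained in $A$ are bounded by $A_q(a,d;k)$, and one checks by the orthogonal-code symmetry $A_q(v,d;K)=A_q(v,d;v-K)$ that this equals the stated $A_q(v-a,d;k)$ after the reflection $a \mapsto v-a$. Since these inequalities are consequences of the distance condition, any code satisfies them, so the feasible region for codes is unchanged; they serve only to strengthen the LP relaxation. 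Combining both directions, the feasible integral solutions are exactly the indicator vectors of $(v,N,d;k)_q$ codes, and maximizing $\sum x_U$ therefore returns $A_q(v,d;k)$.

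The main obstacle I anticipate is the bookkeeping at the boundary of the index ranges, namely checking that the thresholds $k-\delta$, $k-\delta+1$, $k+\delta-1$, and $k+\delta$ partition the relevant dimensions correctly and that the translation between the ``contained in'' and ``contains'' cases via the orthogonal map $\pi$ lines up the right bound with the right range. In particular I would take care that the degenerate cases ($d = 2k$, where $\delta = k$ collapses the first family; or $a = k$ itself) are handled consistently, and that the claimed equality $A_q(a,d;k) = A_q(v-a,d;k)$ used to rewrite the third family is applied in the correct direction. Everything else reduces to the elementary dimension count for $\sdist$ and to direct invocation of the lemmas already established above.
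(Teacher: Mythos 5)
Your overall strategy is the one the paper intends: the paper's own ``proof'' consists only of the remark that the constraints follow from Lemma~\ref{lem:deg_uppter_bound} and are clique constraints of an independent-set formulation, and your write-up correctly expands this into the two directions (codes give feasible $0/1$-points of the same size; feasible $0/1$-points have pairwise distance at least $d$ because two codewords with $\dim(U\cap W)\ge k-\delta+1$ share a $(k-\delta+1)$-dimensional subspace, whose constraint they would violate). That part is sound.

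However, your treatment of the third constraint family contains a genuine error. You assert that the bound $A_q(a,d;k)$ delivered by Lemma~\ref{lem:deg_uppter_bound} for $\dim(A)=a\ge k+\delta$ ``equals the stated $A_q(v-a,d;k)$'' via the orthogonal-code symmetry $A_q(v,d;K)=A_q(v,d;v-K)$ and a reflection $a\mapsto v-a$. No such identity exists: orthogonality fixes the ambient dimension and dualizes the codeword dimensions, giving $A_q(a,d;k)=A_q(a,d;a-k)$, never $A_q(v-a,d;k)$. For instance, with $v=8$, $k=4$, $d=6$, $a=7$ one has $A_2(7,6;4)=17$, while $A_2(1,6;4)$ is degenerate (there are no $4$-spaces in a $1$-space). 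What you have actually run into is a misprint in the lemma as stated: the third family should read $A_q(a,d;k)$, which is exactly what Lemma~\ref{lem:deg_uppter_bound} gives and what the paper itself uses later (the hyperplane bound $17=A_2(7,6;4)$ for $a=7$ in Lemma~\ref{lemma_ILP_hyperplane_blowup}). Taken literally, the statement is even false: for $(v,d,k)=(10,4,3)$ and $a=6$ the constraint would read $\sum_{U\in\I{G}{A}}x_U\le A_q(4,4;3)=1$, yet any two $3$-dimensional codewords span at most a $6$-dimensional space, so every code of size at least $2$ would be infeasible and the optimal value could not equal $A_q(10,4;3)$. The correct resolution is to identify and fix the typo (or observe that with the corrected bound the constraints are valid and the argument goes through); inventing an identity to reconcile the printed bound is a step that fails, and as written your proof of that family's validity does not hold.
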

The constraints are due to Lemma~\ref{lem:deg_uppter_bound} and correspond to clique constraints in an independent set formulation. We remark that 
the constraints corresponding to dimensions $a$ between $k-\delta+2$ and $k+\delta-2$ are redundant, i.e., they are implied by 
those for $a=k-\delta+1$ and $a=k+\delta-1$. The entire ILP consists of $\gaussmnum{v}{k}{q}$ binary variables, and
$\sum_{a=1}^{k-\delta+1} \gaussmnum{v}{a}{q} + \sum_{a=k+\delta-1}^{v-1} \gaussmnum{v}{a}{q}$ constraints. 

The linear programming (LP) relaxation of a binary linear program (BLP) $\max\{c^T x \mid A \cdot x \le b \land x \in \{0,1\} \}$ is 
given by $\max\{c^T x \mid A \cdot x \le b \land 0 \le x \le 1\}$. Note that the optimal value of an LP relaxation of an BLP is an upper 
bound for the objective function of the BLP.

Now we combine both approaches, i.e., we utilize the BLP from Lemma~\ref{lemma_ILP} and additionally prescribe each of the 
$715$ isomorphism types of $(7,17,6;4)_2$, i.e., $17$ variables $x_U$ are set to $1$, in separate computations. To this end, 
we remark that the hyperplane $\bar{H}$ can be chosen arbitrarily, since the group $\GL\left(\mathbb{F}_2^8\right)$ operates 
transitively on the set of hyperplanes.
The computation took 1021 hours on the cluster at the University of Bayreuth in parallel with at most 4 kernels. All objective values of the 
corresponding LP relaxations are between $206.2$ and $282.97$. The mean is approximately  $235.1$ with a standard deviation of roughly $5$. 
Only $6$ values are at least $255$: $255.67, 257.0, 258.75, 261.12, 268.04, 282.96$. Hence we conclude that $A_2(8,6;4) \le 282$.

In order to assume a hyperplane containing $17$ codewords, we have imposed $\#C\ge 273$, so that the tightest possible bound along those 
lines would be $A_2(8,6;4) \le 272$. To that end only the unique isomorphism type of a partial plane spread with LP objective value $282.96$ 
needs to be excluded. In principle one may just try to solve the corresponding BLP for this single case.

However, the following combinatorial relaxation turns out to be more promising. Consider $C' = \{U \cap \bar{H} \mid U \in C\}$, where 
the $17$ codewords, that are completely contained in $\bar{H}$, correspond to one of the previously not excluded isomorphism types of partial plane spreads. 
Being a bit more ambitious, we consider all four isomorphism types with an LP objective value of at least $258$.  
The prescribed $17$ codewords yield $17$ subspaces of dimension $4$ in $C'$ and all other codewords have dimension $3$. The pairwise intersection 
of $3$-dimensional codewords among themselves and with the $17$ $4$-dimensional subspaces is at most $1$-dimensional, due to $\sdist=6$. 
Since $\gaussmnum{7}{3}{2}=11811<200787=\gaussmnum{8}{4}{2}$ we get a much smaller problem. Moreover, the $17$ $4$-dimensional subspaces 
forbid many of the potential $3$-dimensional subspaces. Let $F^\perp$ be the orthogonal code of one of the 4 isomorphism types of $(7,17,6;3)_2$ 
codes which have an LP relaxation of at least $258$ and
$A(F):=\left.\left\{ U \in \gaussmset{\F_2^7}{3} \,\right|\, \dim(U \cap W) \le 1 \,\forall W \in F \right\}$. From the above we conclude
\begin{lemma}
  \label{lemma_ILP_hyperplane}
  If $C$ is an $(8,N,6;4)_2$ cdc containing the code $F$ in a hyperplane, then $N\le z(F)+\#F$, where
  \begin{align*}
z(F) = \max \sum_{U \in A(F)} &x_U \\
\sum_{U \in \I{A(F)}{L}} &x_U \le 1			&\forall L \in \gaussmset{\F_2^7}{2} \\
&x_U \in \{0,1\}							&\forall U \in A(F)
\end{align*}
\end{lemma}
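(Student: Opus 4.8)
The plan is to split $C$ according to whether a codeword lies inside the distinguished hyperplane $\bar H\cong\F_2^7$ of $V\cong\F_2^8$, and then to show that intersecting the remaining codewords with $\bar H$ yields a feasible $0/1$-solution of the program defining $z(F)$. First I would argue that the codewords of $C$ contained in $\bar H$ are exactly the $17$ elements of $F$: such codewords are $4$-spaces inside $\bar H$ and form a $(7,\cdot,6;4)_2$ cdc, so their number is at most $A_2(7,6;4)=17$, a value already realised by $F$. Writing $C=F\cup C_1$ with $C_1:=C\setminus F$, every $U\in C_1$ satisfies $U\not\le\bar H$, whence $\dim(U\cap\bar H)=\dim U+\dim\bar H-\dim V=3$; set $\phi(U):=U\cap\bar H\in\gaussmset{\F_2^7}{3}$.

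Next I would check that $\phi$ maps $C_1$ into $A(F)$. For $U\in C_1$ and $W\in F$ the minimum distance gives $\sdist(U,W)\ge 6$, hence $\dim(U\cap W)\le 1$ since $\dim U=\dim W=4$. Because $W\le\bar H$, we have $\phi(U)\cap W=(U\cap\bar H)\cap W=U\cap W$, so $\dim(\phi(U)\cap W)\le 1$ for every $W\in F$; by the very definition of $A(F)$ this means $\phi(U)\in A(F)$.

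The core of the argument is that the indicator vector of $\{\phi(U)\mid U\in C_1\}$ is feasible for the program in the statement. For distinct $U_1,U_2\in C_1$ the condition $\sdist(U_1,U_2)\ge 6$ forces $\dim(U_1\cap U_2)\le 1$, and since $\phi(U_1)\cap\phi(U_2)=U_1\cap U_2\cap\bar H\le U_1\cap U_2$ we obtain $\dim(\phi(U_1)\cap\phi(U_2))\le 1$. In particular $\phi$ is injective, and no line $L\in\gaussmset{\F_2^7}{2}$ can lie in two distinct images, as that would give $\dim(\phi(U_1)\cap\phi(U_2))\ge 2$. Thus for each such $L$ at most one of the variables $x_{\phi(U)}$ with $L\le\phi(U)$ equals $1$, which is exactly the constraint $\sum_{U\in\I{A(F)}{L}}x_U\le 1$. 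Hence the vector is feasible with objective value $\#C_1$, so $\#C_1\le z(F)$, and combining with $\#F=17$ gives $N=\#F+\#C_1\le z(F)+\#F$.

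Most of this is routine intersection-dimension bookkeeping once the identifications $\phi(U)\cap W=U\cap W$ and $\phi(U_1)\cap\phi(U_2)=U_1\cap U_2\cap\bar H$ are in place. The one step deserving genuine care, and the main obstacle, is the faithful translation of the subspace-distance condition $\sdist\ge 6$ among the $4$-dimensional codewords into the line-packing constraint of the program: one must verify simultaneously that $\phi$ is injective and that every $2$-dimensional subspace of $\bar H$ is covered by at most one shortened codeword. Everything else follows directly from the definitions of $A(F)$, of $z(F)$, and of the orthogonal code relating $F$ to its $(7,17,6;3)_2$ counterpart.
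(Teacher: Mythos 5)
Your proof is correct and follows essentially the same route as the paper, whose justification is the prose preceding the lemma: shorten $C$ by intersecting with the hyperplane $\bar H$, note that the distance condition $\sdist\ge 6$ forces all pairwise intersections (among shortened codewords and with the elements of $F$) to be at most $1$-dimensional, and read this off as membership in $A(F)$ together with the line-packing constraints of the binary program. Your write-up merely makes explicit two points the paper leaves implicit, namely the injectivity of $U\mapsto U\cap\bar H$ and the identification $\I{C}{\bar H}=F$ via $A_2(7,6;4)=17$.
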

The general benefit from a BLP formulation as in Lemma~\ref{lemma_ILP_hyperplane} is that the computation of $z(F)$ can be interrupted 
at any time still yielding an upper bound of $z(F)$. Spending 8~hours computation time on the BLP of Lemma~\ref{lemma_ILP_hyperplane} for 
each of the remaining $4$ subproblems yields the following results:

\begin{center}
\begin{tabular}{llll}
$\#\Aut$	&		LP bound Lemma~\ref{lemma_ILP} & $\#A(F)$ 	& $z(F)+17\le$ 	\\
\hline
24			&		258.75 					& 900 		& 250.31 											\\ 
4			&		261.12 					& 896 		& 255.43 											\\ 
32			&		268.04 					& 948 		& 259.67 											\\ 
64			&		282.96 					& 864 		& 267.67 											\\ 
\end{tabular}
\end{center}

Hence we conclude that $A_2(8,6;4) \le 272$. We remark that the stated computation times heavily depend on the used 
(I)LP solver 
and that the case $F=\emptyset$ in Lemma~\ref{lemma_ILP_hyperplane} corresponds to the determination of 
$A_2(7,4;3)$, where $333\le A_2(7,4;3)\le 381$ is known \cite{HKKW2016Tables}.

\section{Alternative ways to 
prove $A_2(8,6;4) \le 272$}
\label{sec_alternative_approach}
In this section we want to present alternative approaches to computationally prove $A_2(8,6;4) \le 272$. Given the needed 
1053~hours of computation time of the approach of Section~\ref{main_sec}, an independent verification might not be a bad idea. Especially, numerical 
algorithms based on floating point numbers might be considered to be suspicious. So, we try to minimize the number of those computations. However, our 
main motivation is to present different algorithmic approaches that might be beneficial for other parameters.

While the approach of Section~\ref{main_sec} is based on the classification of $(7,17,6;3)_2$ constant dimension codes, using 
Lemma~\ref{lemma_hyperplane},  we can also start with a classification of the $(7,34,5;\{3,4\})_2$ subspace codes.
 
\begin{theorem}(\cite[Theorem~3.3.ii]{MR3543542}, \cite[Theorem~6]{honold2016classification})
\label{thm_class_2}
$A_2(7,5;\{0,\dots,7\})=34$ and there are exactly $20$ isomorphism types of codes having these parameters. 
All of them have dimension distribution $3^{17} 4^{17}$. In $11$ cases, the automorphism group is trivial and 
in the remaining $9$ cases, the automorphism group is a unique group of order $7$, which partitions $\mathbb{F}_2^7$ 
into $2$ fix points and $18$ orbits of size $7$.
\end{theorem}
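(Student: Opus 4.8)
The plan is to reduce the classification to a computer-assisted gluing of two substructures that are themselves already classified, after first forcing the dimension distribution. First I would establish that every code attaining $A_2(7,5;\{0,\dots,7\})$ has dimension distribution $3^{17}4^{17}$. The distance constraints pin down the internal structure of each dimension class: two codewords $U,W$ of equal dimension $k$ satisfy $\sdist(U,W)=2\bigl(k-\dim(U\cap W)\bigr)$, which is even, so $\sdist\ge 5$ already forces $\sdist\ge 6$; for $k=3$ this means $\dim(U\cap W)=0$, and for $k=4$ it means $\dim(U\cap W)\le 1$. A codeword of dimension $3$ and one of dimension $4$ must meet in at most a point, since here $\sdist=7-2\dim(U\cap W)\ge 5$. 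Using $A_2(7,6;3)=17$ from Theorem~\ref{thm_class_1}, the orthogonality relation $A_q(v,d;K)=A_q(v,d;v-K)$ from Section~\ref{sec_preliminaries}, and Lemma~\ref{lem:deg_uppter_bound}, I would rule out all other dimensions (codewords of dimension $0,1,2$ and, dually by $\pi$, of dimension $5,6,7$ are too sparse to let the total reach $34$) and thereby show both that $34$ is the maximum and that the extremal distribution is forced.

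Next I would exploit the resulting structure. The $17$ three-dimensional codewords form a $(7,17,6;3)_2$ cdc, and the $17$ four-dimensional codewords form a $(7,17,6;4)_2$ cdc whose orthogonal is again a $(7,17,6;3)_2$ cdc. Hence, up to the action of $\langle\PGammaL(\Fqn),\pi\rangle$, I may assume that the three-dimensional part equals a fixed representative $A$ of one of the $715$ isomorphism types of Theorem~\ref{thm_class_1}. For each such $A$ I would describe the admissible four-dimensional codewords as
$$
\left\{ U\in\gaussmset{\F_2^7}{4} : \dim(U\cap W)\le 1 \text{ for all } W\in A \right\},
$$
and then search for $17$-element subsets of these that pairwise intersect in at most a point. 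This is a maximum-clique (equivalently, maximum-independent-set) computation in the compatibility graph on the admissible subspaces, entirely analogous to the clique constraints underlying Lemma~\ref{lemma_ILP}; since the aim is a classification rather than a bound, I would enumerate \emph{all} completions of size $17$, not merely certify the existence of one.

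Finally I would carry out isomorph rejection. For a fixed base code $A$ the residual symmetry is its automorphism group, whose orders are listed in Theorem~\ref{thm_class_1}, while globally two glued codes are equivalent precisely under $\langle\PGammaL(\Fqn),\pi\rangle$; computing a canonical form under this group collapses the many completions down to the claimed $20$ types. The automorphism group of each surviving representative is then read off, verifying that $11$ of them are trivial and the remaining $9$ carry a unique group of order $7$ acting on $\F_2^7$ with $2$ fixed points and $18$ orbits of length $7$.

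The main obstacle I expect is the isomorph rejection rather than the individual completions. There are $715$ base codes to process, the equivalence that must be quotiented out mixes the three- and four-dimensional halves through the duality $\pi$, and one must be certain that completions produced from different base codes are not recorded as distinct types. Reliable canonical-form computation under the full group $\langle\PGammaL(\Fqn),\pi\rangle$, together with the verification of the exact automorphism orders and the fixed-point/orbit pattern of the order-$7$ groups, is therefore the delicate part of the argument.
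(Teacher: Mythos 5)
First, a point of comparison: the paper does not prove Theorem~\ref{thm_class_2} at all --- it imports the statement from \cite[Theorem~3.3.ii]{MR3543542} (the value $34$ and the dimension distribution) and \cite[Theorem~6]{honold2016classification} (the classification into $20$ types). Your gluing-and-clique machinery is indeed how the classification part is obtained there, and it coincides with the paper's own proof of the analogous Theorem~\ref{thm_class_3}: prescribe one of the $715$ types of $(7,17,6;3)_2$ codes from Theorem~\ref{thm_class_1} as the $3$-dimensional half, enumerate the admissible solids, search for cliques of the required size with exact (non-floating-point) software, and perform isomorph rejection under $\langle \PGammaL(\Fqn), \pi\rangle$; your warning that $\pi$ mixes the two halves is exactly the right concern. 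So the second and third steps of your plan are sound.

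The genuine gap is in your first step. You treat $A_2(7,5;\{0,\dots,7\})=34$ together with the forced distribution $3^{17}4^{17}$ as a routine preliminary, dismissed because codewords of dimensions $0,1,2$ (and dually $5,6,7$) are ``too sparse''. That sparsity argument does not go through. A codeword $L$ of dimension $2$ is perfectly compatible with minimum distance $5$: it only forces every $3$- or $4$-dimensional codeword to be disjoint from $L$, and a $5$-dimensional codeword $F$ with $\dim(L\cap F)\le 1$ may be present as well (every plane of the code then meets $F$ in exactly a point, every solid in exactly a line). The tools you name give in this situation only $n_2\le 1$, $n_5\le 1$, $n_3\le 17$, and $n_4\le 16$ --- the last bound by dualizing: solids disjoint from $L$ become pairwise disjoint planes, each having $6$ of its $7$ points outside the $31$-point subspace $L^\perp$, and $6\cdot 17 > 96$. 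That is a ceiling of $35$, not something below $34$; in particular, distributions such as $2^1 3^{17} 4^{16} 5^1$ (size $35$, contradicting maximality) and $2^1 3^{17} 4^{16}$ (size $34$, contradicting the claimed uniqueness of the distribution) survive your counting and must be excluded by genuinely finer structural arguments, e.g.\ involving the hole sets of maximum partial plane spreads. This exclusion is precisely the content of \cite[Theorem~3.3.ii]{MR3543542}, a substantive theorem in its own right, and not a corollary of Lemma~\ref{lem:deg_uppter_bound}, duality, and $A_2(7,6;3)=17$. As written, your clique search would classify the $(7,34,5;\{3,4\})_2$ codes, but it would prove neither the value $A_2(7,5;\{0,\dots,7\})=34$ nor that all optimal codes have dimension distribution $3^{17}4^{17}$.
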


Applying Corollary~\ref{cor:ex_P_H} with $q=2$, $k=4$, and $c=0$ gives that every $(8,N,6;4)_2$ code with $N>280.5$ has 
to contain a hyperplane whose intersection with the code is a $(7,34,5;\{3,4\})_2$ subspace code. The corresponding $20$ 
isomorphism types contain just nine of of the $715$ isomorphism types of $(7,17,6;3)_2$ and $(7,17,6;4)_2$ constant dimension 
codes. Denoting these nine cases by $a_1,\dots,a_9$, the $20$ isomorphism types of $(7,34,5;\{3,4\})_2$ subspace codes 
can be categorized as 
$$\{\{a_1,a_6\},\{a_2,a_6\},\{a_3,a_7\},\{a_3,a_8\},\{a_4,a_4\},\{a_4,a_9\},\{a_5,a_6\},\{a_6,a_6\}\}.
$$ 
In particular, these pairings can be covered by just the three cases $\{a_3,a_4,a_6\}$. Prescribing the corresponding $17$ 
codewords and computing the LP relaxation of Lemma~\ref{lemma_ILP} gives:\\[-7mm] 

\begin{center}
\begin{tabular}{lll}
type & $\#\Aut$	&		LP bound Lemma~\ref{lemma_ILP}	\\
\hline
$a_4$	& 32		&		221.00\\
$a_6$	& 7		&		230.63\\
$a_3$	& 32		&		268.04\\
\end{tabular}
\end{center} 

Thus, by computing three linear programs, we can conclude $A_2(8,6;4) \le 280$. We remark that the classification results of 
Theorem~\ref{thm_class_1} and Theorem~\ref{thm_class_2} were obtained using the clique search software \texttt{cliquer 1.21} \cite{niskanen2003cliquer}, which is not 
based on floating point numbers.  

An upper bound for $A_2(8,6;4)$ based on Corollary~\ref{cor:ex_P_H} with $q=2$, $k=4$, and $c=1$ needs the classification of all 
$(7,33,5;\{3,4\})_2$ subspace codes.

\begin{theorem}
\label{thm_class_3}
There are $563$ isomorphism types of $(7,33,5;\{3,4\})_2$ 
codes. Their automorphism groups have
orders: $1^{481}2^{19}4^{4}7^{56}8^{1}14^{2}$. The possible dimension distributions are $3^{16} 4^{17}$ 
and $3^{17} 4^{16}$, both appearing for a code and its orthogonal.
\end{theorem}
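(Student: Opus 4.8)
The plan is to split a code by codeword dimension and to first pin down the admissible dimension distributions. Let $C$ be a $(7,33,5;\{3,4\})_2$ code and write $C = C_3 \cup C_4$, where $C_3$ and $C_4$ collect the codewords of dimension $3$ and $4$, respectively. For two $3$-spaces $U,U'$ we have $\sdist(U,U') = 6 - 2\dim(U\cap U')$, which is even, so $\sdist \ge 5$ forces $\sdist \ge 6$ and hence $U\cap U' = \{0\}$; thus $C_3$ is a partial plane spread and $\#C_3 \le A_2(7,6;3)=17$ by Theorem~\ref{thm_class_1}. Likewise two $4$-spaces satisfy $\sdist = 8 - 2\dim(U\cap U') \ge 6$, so they meet in at most a point, $C_4$ is a $(7,\#C_4,6;4)_2$ code, and $\#C_4 \le A_2(7,6;4) = A_2(7,6;3) = 17$ via orthogonality. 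Since $\#C_3 + \#C_4 = 33$ with both summands at most $17$, the distribution is either $3^{17}4^{16}$ or $3^{16}4^{17}$. Finally, a $3$-space $U$ and a $4$-space $W$ in $C$ obey $\sdist = 7 - 2\dim(U\cap W) \ge 5$, i.e.\ $\dim(U\cap W)\le 1$.

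Next I would use orthogonality to halve the work. Since $\pi\colon U\mapsto U^\perp$ lies in the automorphism group of the metric space and interchanges dimensions $3$ and $4$ in $\F_2^7$, the map $C\mapsto C^\perp$ is an isomorphism carrying the distribution $3^{17}4^{16}$ to $3^{16}4^{17}$. Hence every isomorphism type contains representatives of both distributions---namely a code and its orthogonal---and it suffices to enumerate codes with distribution $3^{17}4^{16}$ up to $\PGammaL$. Moreover, any automorphism of such a $C$ preserves codeword dimensions and therefore cannot involve $\pi$; consequently $\Aut(C)$ is the stabiliser of $C$ in $\PGammaL$, and in fact $\Aut(C) = \st_{\Aut(C_3)}(C_4)$, because an automorphism must fix the determined subsets $C_3$ and $C_4$ separately.

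The computational core is then a constrained clique enumeration. For a code with distribution $3^{17}4^{16}$ the part $C_3$ is a \emph{maximum} partial plane spread, i.e.\ one of the $715$ isomorphism types of $(7,17,6;3)_2$ codes of Theorem~\ref{thm_class_1}, whose automorphism groups are known. For each such type I would compute the set $A(C_3)$ of $4$-spaces meeting every element of $C_3$ in at most a point, form the graph on $A(C_3)$ in which two $4$-spaces are adjacent iff they intersect in at most a point, and enumerate all $16$-cliques, which are precisely the admissible parts $C_4$. Reducing these cliques under the action of $\Aut(C_3)$ yields exactly the isomorphism types with this fixed $C_3$ (distinct $C_3$-types never give $\PGammaL$-isomorphic codes, as $C_3$ is a $\PGammaL$-invariant of $C$), and the stabiliser of each orbit representative gives the corresponding $\Aut(C)$. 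Summing the orbit counts over all $715$ types should produce $563$, with automorphism orders aggregating to $1^{481}2^{19}4^{4}7^{56}8^{1}14^{2}$; along the way one checks that the minimum subspace distance is exactly $5$.

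The hard part is the clique enumeration itself: $A(C_3)$ is sizeable and the number of $16$-cliques is large, so an isomorph-free generation (for instance by orbit reduction under $\Aut(C_3)$ at each augmentation step) is needed both for tractability and for a certificate of completeness; using an exact clique solver such as \texttt{cliquer} avoids floating-point doubts. As an independent check I would also generate those $3^{17}4^{16}$ codes that extend to size $34$ by deleting each dimension-$4$ codeword from the $20$ types of $(7,34,5;\{3,4\})_2$ codes of Theorem~\ref{thm_class_2}; every code so obtained must reappear among the $563$ types, and its automorphism data must match.
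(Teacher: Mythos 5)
Your proposal is correct and takes essentially the same route as the paper: restrict to the dimension distribution $3^{17}4^{16}$ via orthogonality, then for each of the $715$ isomorphism types of $(7,17,6;3)_2$ codes compute the compatible $4$-spaces $A(C_3)$, enumerate $16$-cliques in the graph whose edges are pairs at subspace distance at least $6$ (using \texttt{cliquer}), and reduce the cliques to orbits under $\Aut(C_3)$. Your added justifications---the parity argument pinning down the distribution, and the observation that orbit reduction under $\Aut(C_3)$ yields exactly the isomorphism types since $C_3$ is a $\PGammaL$-invariant---are correct and make explicit steps the paper leaves implicit.
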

\begin{proof}
  For each of the $715$ isomorphism types of $(7,17,6;3)_2$ constant dimension codes $C$ in $\mathbb{F}_2^7$ 
  we first compute $A(C)=\left.\left\{W \in \gaussmset{\mathbb{F}_2^7}{4} \,\right|\, d_S(W,U) \ge 5 \forall U \in C \right\}$. 
  Then, we build up a graph $\mathcal{G}(C)$ with vertex set $A(C)$. Two different vertices $U,W\in A(C)$ are joined by 
  an edge iff $\sdist(U,W) \ge 6$. These $715$ graphs have between $832$ and $1056$ vertices and between $213760$ and $353088$ 
  edges. Applying the software package \texttt{cliquer 1.21} \cite{niskanen2003cliquer} on the computing cluster of the University 
  of Bayreuth 
  gives $23740$ cliques of cardinality $16$ each -- after 11,200 hours of computational time. Via the group action of the automorphism 
  group of the corresponding $(7,17,6;3)_2$ cdc $C$, they form $563$ orbits. 
\end{proof}

We remark that $76$ out of the $715$ isomorphism types of $(7,17,6;3)_2$ codes can be extended to $(7,33,5;\{3,4\})_2$ codes 
having automorphism groups of orders $1^{51}2^{7}3^{3}4^{2}6^{1}7^{1}12^{1}16^{2}32^{2}42^{1}64^{1}112^{1}128^{1}192^{1}2688^{1}$ and 
extensions of frequencies $1^{56}2^{7}3^{1}4^{1}5^{2}6^{1}10^{1}11^{1}44^{1}49^{1}67^{1}77^{1}104^{1}108^{1}$. In $75$ of these $76$ 
cases the LP relaxation of Lemma~\ref{lemma_ILP} gives an objective value strictly smaller than $272$, so that only one case with 
LP relaxation $282.96$ and $\#\Aut=64$ remains. As described in Section~\ref{main_sec}, we can apply the BLP of 
Lemma~\ref{lemma_ILP_hyperplane}. Thus, besides exact arithmetic clique computations, $75$ LP computations and a single BLP computation 
suffices to deduce $A_2(8,6;4) \le 272$.  

Instead of decomposing the $563$ isomorphism types of $(7,33,5;\{3,4\})_2$ codes into their components, we may also utilize 
the following BLP formulation.
\begin{lemma}
  \label{lemma_ILP_hyperplane_blowup}
  If $C$ is an $(8,N,6;4)_2$ cdc containing the $(7,17,6;3)_2$ code $F_3$ and $(7,16,6;4)_2$ code $F_4$ in the hyperplane $\im(\iota)$ then $N\le z(F_3,F_4)$, where $\iota: \F_2^7 \rightarrow \F_2^8, v \mapsto (v \mid 0)$, $G:=\gaussmset{\Fqn}{4}$, $Q:=\gaussmset{\Fqn}{1} \setminus \I{\gaussmset{\Fqn}{1}}{\im(\iota)}$, and
  \begin{align*}
z(F_3,F_4) = &\max \sum_{U \in G} x_U \quad\text{st}&
  x_U = 1			&\quad\forall U\in \iota(F_4) \\
  \!\!\!\!\sum_{U' \in \iota(F_3)}\!\!x_{\langle U',P\rangle} = y_P			&\quad\forall P \in Q &
  \sum_{P\in Q}y_{P} = 1			&  \\
  \!\!\!\!\sum_{U \in \I{G}{A}}\!\!\!\!\!\!\!\! x_U \le 17	&\quad\forall A \in \gaussmset{\Fqn}{a} \forall a\in\{1,7\} &
  \!\!\!\!\sum_{U \in \I{G}{A}}\!\!\!\!\!\!\!\! x_U \le 1	&\quad\forall A \in \gaussmset{\Fqn}{a} \forall a\in\{2,6\} \\
x_U \in \{0,1\}							&\quad\forall U \in G &y_P \in \{0,1\}							&\quad\forall P \in Q\\
\end{align*}
\end{lemma}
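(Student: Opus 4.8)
The plan is to prove Lemma~\ref{lemma_ILP_hyperplane_blowup} by exhibiting an explicit correspondence between feasible solutions of the BLP and $(8,N,6;4)_2$ cdcs $C$ that contain the prescribed codes $F_3$ and $F_4$ inside the hyperplane $\im(\iota)$, so that the optimal value $z(F_3,F_4)$ equals the maximum size $N$. The starting point is Corollary~\ref{cor:ex_P_H}, which (with $q=2$, $k=4$) guarantees that a sufficiently large cdc admits a hyperplane $\bar H$ and a point $\bar P\not\leq\bar H$ with $\#\I{C}{\bar H}+\#\I{C}{\bar P}$ large; after applying $\GL(\F_2^8)$ we may assume $\bar H=\im(\iota)$, so the intersection $C\cap\bar H$ is exactly the union $\iota(F_3)\cup\iota(F_4)$ of the $4$-dimensional codewords $\iota(F_4)$ lying in $\bar H$ together with the $3$-dimensional traces coming from the classified $(7,33,5;\{3,4\})_2$ structure. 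This is what fixes the two constraints $x_U=1$ for $U\in\iota(F_4)$ and forces the remaining codewords to be accounted for through the point $\bar P$.

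First I would set up the variable dictionary: a binary variable $x_U$ for each $U\in G=\gaussmset{\F_2^8}{4}$ records membership in $C$, and a binary variable $y_P$ for each point $P\in Q$ (the points off the hyperplane) records the choice of $\bar P$. The equation $\sum_{P\in Q}y_P=1$ says a single point $\bar P\not\leq\bar H$ is selected; the linking constraints $\sum_{U'\in\iota(F_3)}x_{\langle U',P\rangle}=y_P$ express that the $3$-dimensional codewords $U'$ of $F_3$ extend through $\bar P$ to $4$-dimensional codewords $\langle U',P\rangle$ precisely when $P=\bar P$, which is the mechanism by which the shortened-code picture of Lemma~\ref{lemma_hyperplane} is reversed to reconstruct the full code. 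The remaining three families of constraints are simply the clique inequalities of Lemma~\ref{lem:deg_uppter_bound} specialized to $v=8$, $k=4$, $d=6$: for $a\in\{1,7\}$ one uses $A_2(7,6;4)=A_2(7,6;3)=17$ (Theorem~\ref{thm_class_1}) to get the bound $17$, while for $a\in\{2,6\}$ the pairwise distance $d=6$ forces at most one codeword through any $2$- or $6$-dimensional flat. I would remark, as the paper already does for Lemma~\ref{lemma_ILP}, that the redundant intermediate dimensions $a\in\{3,4,5\}$ need not be listed.

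The verification then splits into the two standard directions. For \emph{validity} (every such cdc yields a feasible point of value $N$), I would take $C$, set $x_U=1$ iff $U\in C$, identify $\bar P$ as the point through which the lifted copies of $F_3$ pass and set $y_{\bar P}=1$, and check each constraint: membership $\iota(F_4)\subseteq C$ gives the first equations, the incidence bounds follow from Lemma~\ref{lem:deg_uppter_bound}, and the distance $d=6$ gives the singleton constraints. For \emph{feasibility implies realizability}, I would start from an optimal BLP solution, declare $C=\{U\in G\mid x_U=1\}$, and argue that the clique constraints for $a\in\{2,6\}$ already force $\sdist(U,W)\geq 6$ for all distinct codewords, so that $C$ is a genuine cdc with the required minimum distance and contains $F_3$, $F_4$ in the hyperplane; the target value equals $\#C$ by construction. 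The main obstacle I expect is the careful bookkeeping around the linking constraints: I must confirm that every $4$-dimensional codeword not already contained in $\bar H$ arises as a join $\langle U',\bar P\rangle$ with $U'$ a $3$-space of $\bar H$, so that routing all such codewords through the single selected point $\bar P$ neither loses admissible codes nor admits spurious ones. Establishing this bijection between codewords of $C$ meeting $\bar P$ and the lifts $\langle U',\bar P\rangle$ of traces in $F_3$ — i.e.\ that the hyperplane-plus-point decomposition is both exhaustive and injective under the distance hypotheses — is the delicate combinatorial core, whereas the inequality checks are routine applications of the preliminaries.
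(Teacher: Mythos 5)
The paper states Lemma~\ref{lemma_ILP_hyperplane_blowup} without any proof (it is presented as evident from Lemma~\ref{lemma_hyperplane}, Corollary~\ref{cor:ex_P_H} and Theorem~\ref{thm_class_3}), so your attempt has to stand on its own, and it does not. First, note that the lemma claims only the inequality $N\le z(F_3,F_4)$, so the only thing to prove is that every code $C$ satisfying the hypothesis yields a feasible point of the BLP with objective value $N$; your second direction (``feasibility implies realizability'') is not needed, and your opening claim that $z(F_3,F_4)$ \emph{equals} the maximum size is more than the lemma asserts. The fatal problem lies in the one direction that matters: you verify every constraint family \emph{except} the linking constraints $\sum_{U'\in\iota(F_3)}x_{\langle U',P\rangle}=y_P$, which you only paraphrase. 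Carrying out that check reveals that your claimed feasible assignment is infeasible. Under the hypothesis (as produced by Lemma~\ref{lemma_hyperplane} and Corollary~\ref{cor:ex_P_H}), $C$ contains all $17$ solids $\langle U',\bar{P}\rangle$ with $U'\in\iota(F_3)$, so for the indicator vector of $C$ one gets $\sum_{U'\in\iota(F_3)}x_{\langle U',\bar{P}\rangle}=17$, which can never equal $y_{\bar{P}}\in\{0,1\}$. Moreover, since the members of $F_3$ meet trivially, these $17$ solids pairwise meet exactly in $\bar{P}$, so they cover $17\cdot 7=119$ further points of $Q$, and at each such point the left-hand side equals $1$ while $\sum_{P\in Q}y_P=1$ forces $y_P=0$ there. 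So the constraint fails at $\bar{P}$ and at $119$ other points, no matter how the $y$-variables are chosen.

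In fact the defect sits in the printed formulation itself, and detecting and repairing it is the real content of a proof of this lemma. Summing the linking constraints over all $P\in Q$ counts each solid $W$ with $W\cap\im(\iota)\in\iota(F_3)$ exactly $8$ times (once for each of its points off the hyperplane), so any integral feasible point would have to satisfy $8\cdot\sum_W x_W=\sum_{P\in Q}y_P=1$, which is impossible: the BLP as printed has an empty feasible region, while codes satisfying the hypothesis certainly exist (e.g.\ of size $33$, by lifting $F_3\cup F_4$ through a fixed point of $Q$). A correct proof must therefore first restate the linking constraints, e.g.\ as $x_{\langle U',P\rangle}\ge y_P$ for all $U'\in\iota(F_3)$ and $P\in Q$ (equivalently $\sum_{U'\in\iota(F_3)}x_{\langle U',P\rangle}\ge 17\,y_P$); with that repair the verification you outline does go through: take the indicator vector of $C$, set $y_{\bar{P}}=1$, and check the prescription constraints via $\iota(F_4)\subseteq C$, the clique constraints via Lemma~\ref{lem:deg_uppter_bound} and the minimum distance $6$, and the linking constraints via the $17$ solids through $\bar{P}$. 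Because you never performed the computation at exactly this spot --- you yourself flag the ``bookkeeping around the linking constraints'' as the delicate core and then leave it unresolved --- your write-up misses both the flaw and its repair, and as it stands proves neither the literal statement (which is false as printed) nor the intended one.
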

Of course, we also obtain $z(F_3,F_4)\le 272$ in all $563$ cases.

\section{Conclusion}
\label{sec_conclusion}
In this paper we have applied integer linear programming techniques in order to improve the upper bound of $A_2(8,6;4)$ from $289$ to $272$. 
While ILP solvers generally struggle with formulations involving a large automorphism group, we have utilized the huge symmetry group 
of the underlying metric space in order to exhaustively enumerate certain substructures up to isomorphism in a first step. In the second
step, prescribing such a substructure removes much of the initial symmetry of the ILP formulation, so that ILP solvers might successfully be 
applied. Here the general key question is to find appropriate substructures. Of course one may go by existing classification results. 
In Theorem~\ref{thm_class_3} we have obtained another such classification result.  
Additionally, we have considered a combinatorial relaxation in Lemma~\ref{lemma_ILP_hyperplane}, which turned out be rather strong. 
Since the current gap $257\le A_2(8,6;4)\le 272$ is still large, the presented algorithmic approaches should be further developed. 
To this end, we remark that the $(7,16,6;3)_2$ codes have also been classified in \cite{honold2016classification} and refer to 
the forthcoming paper \cite{forthcoming_all}, where also implications for the classification of MRD codes and other parameters of 
subspace codes are considered.

Given the bounds $A_2(6,4;3)\le 77$ and $A_2(8,6;4)\le 272$, one might conjecture that $A_{2}(2k,2k-2;k)$ is much smaller 
than $\left(2^k+1\right)^2$, which is implied by the Johnson bound and Beutelspacher's result for partial spreads, for 
increasing $k\ge 3$. Unfortunately, those results yield no improvements for other upper bounds for constant dimension codes 
based on the Johnson bound.

\begin{lemma}
No improvement on the upper bound of $A_q(2k,2k-2;k)$ for $k \ge 3$ yields a stronger bound on $A_q(2k+1,2k-2;k)$ 
as $A_q(2k+1,2k-2;k) = A_q(2k+1,2k-2;k+1) \le \left\lfloor\frac{q^{2k+1}-1}{q^{k+1}-1} A_q(2k,2k-2;k)\right\rfloor$, which is 
implied by the Johnson bound.
\end{lemma}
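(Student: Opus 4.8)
The statement factors into an identity and an inequality, and the plan is to dispatch each with a single prior result. For the identity $A_q(2k+1,2k-2;k)=A_q(2k+1,2k-2;k+1)$ I would invoke the orthogonal-code duality $A_q(v,d;K)=A_q(v,d;v-K)$ from Section~\ref{sec_preliminaries}, taking $v=2k+1$ and $K=\{k\}$ so that $v-K=\{k+1\}$; the hypothesis forces $v=2k+1\ge 7$, well inside the range $v\ge 3$ where the dualising map $\pi$ is available. For the inequality I would apply the Johnson bound \eqref{eq_johnson} with $(v,d,k)$ read as $(2k+1,2k-2,k+1)$, whose reduced parameters $(v-1,d,k-1)=(2k,2k-2,k)$ produce exactly the displayed right-hand side. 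Call the resulting composite estimate \emph{route~B}; it uses only results already in the excerpt and is routine.

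The substance lies in the word ``no'': I must show that no admissible upper bound substituted for $A_q(2k,2k-2;k)$ in route~B can push the resulting estimate for $A_q(2k+1,2k-2;k)$ below what is already known. This needs two ingredients. First, a competing \emph{route~A}: applying \eqref{eq_johnson} directly to $A_q(2k+1,2k-2;k)$ reduces it to $A_q(2k,2k-2;k-1)$; since $d=2k-2=2(k-1)$ here, this is a partial $(k-1)$-spread in $\F_q^{2k}$, a quantity independent of $A_q(2k,2k-2;k)$. Second, the lower bound $A_q(2k,2k-2;k)\ge q^{2k}$ from the lifted MRD construction, which caps how small route~B can ever become: every valid upper bound fed into route~B is at least $q^{2k}$. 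So it suffices to check that route~A is no larger than route~B already when the latter is evaluated at the minimal input $q^{2k}$; then route~B can never beat route~A for any sharpening of $A_q(2k,2k-2;k)$, and the best available bound — route~A — stays put.

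Everything thus reduces to one inequality: route~A $\le$ route~B evaluated at $q^{2k}$, for every prime power $q$ and every $k\ge 3$. I would bound route~A's input from above by $A_q(2k,2k-2;k-1)\le \frac{q^{2k}-1}{q^{k-1}-1}$ (disjointness of the spread members, a point count). Since $a\le b$ implies $\lfloor a\rfloor\le\lfloor b\rfloor$, the floors drop out, and after cancelling the common factor $q^{2k+1}-1$ the claim becomes $(q^{2k}-1)(q^{k+1}-1)\le q^{2k}(q^k-1)(q^{k-1}-1)$, i.e.\ $q^{4k-1}\ge q^{3k+1}+q^{3k}+q^{3k-1}-2q^{2k}-q^{k+1}+1$. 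This holds because $q^{4k-1}=q^{k-2}\cdot q^{3k+1}\ge 2q^{3k+1}\ge q^{3k+1}+q^{3k}+q^{3k-1}$ once $k\ge 3$, $q\ge 2$, while the residual $2q^{2k}+q^{k+1}-1$ is positive. The only delicate point — and the reason for the hypothesis — is that the exponent gap $4k-1-(3k+1)=k-2$ closes at $k=2$, where route~A may exceed route~B; the crude estimates must stay sharp enough to clear the smallest admissible case $q=2$, $k=3$, which is exactly where $k\ge 3$ becomes essential.
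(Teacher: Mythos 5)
Your proposal is correct and follows essentially the same route as the paper's proof: bound $A_q(2k+1,2k-2;k)$ by the Johnson bound through $A_q(2k,2k-2;k-1)\le\frac{q^{2k}-1}{q^{k-1}-1}$, lower-bound the right-hand side of the displayed formula via the lifted-MRD bound on $A_q(2k,2k-2;k)$, and compare the two by the same polynomial inequality that needs $k\ge 3$. The only cosmetic difference is that you use $A_q(2k,2k-2;k)\ge q^{2k}$ with floor monotonicity, while the paper uses $q^{2k}+1$ (lifted MRD plus one codeword) and absorbs the floor by noting the increment $\frac{q^{2k+1}-1}{q^{k+1}-1}$ exceeds $1$; both steps are valid.
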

\begin{proof}
  Due to the Johnson bound and $A_q(2k,2k-2;k-1) \le \frac{q^{2k}-1}{q^{k-1}-1}$, we have 
  \begin{align*}
A_q(2k+1,2k-2;k)
\le
\left\lfloor\frac{q^{2k+1}-1}{q^k-1} A_q(2k,2k-2;k-1)\right\rfloor
\le
\frac{q^{2k+1}-1}{q^k-1}
\cdot
\frac{q^{2k}-1}{q^{k-1}-1}
\\
<
\frac{q^{2k+1}-1}{q^{k+1}-1}
\cdot
q^{2k}
\le
\left\lfloor \frac{q^{2k+1}-1}{q^{k+1}-1} \cdot (q^{2k} +1) \right \rfloor
\le
\left\lfloor\frac{q^{2k+1}-1}{q^{k+1}-1} A_q(2k,2k-2;k)\right\rfloor,
\end{align*}
where we have used $A_q(2k,2k-2;k)\ge q^{2k} +1$, which is obtained from a lifted MRD code extended by an additional codeword. 
\end{proof}  
With respect to possible improvements on $1025 \le A_2(10,8;5) \le 1089$, we remark that, up to our knowledge, the 
$(9,33,8;4)_2$ constant dimension codes have not been classified and the gap $65\le A_2(9,5;\{4,5\})\le 66$ has not been closed yet.  

\section*{Acknowledgment}
The authors would like to thank Michael Kiermaier for preprocessing the codes of the used classification results and supporting us 
in using the software package to compute automorphism groups of constant dimension codes from \cite{ubt_epub42}.  
Further thanks go to the \emph{IT service center} of the University Bayreuth 
for providing the excellent computing cluster and especially Dr. Bernhard Winkler for his support.


\end{document}